\documentclass[12pt]{amsart}

\usepackage{amssymb}
\usepackage{amsmath}
\usepackage{amscd}
\usepackage{bbm} 

\usepackage{diagbox}

\usepackage{hyperref}
\usepackage{cleveref}

\headheight=8pt
\topmargin=30pt 
\textheight=611pt     \textwidth=436pt
\oddsidemargin=6pt   \evensidemargin=6pt

\frenchspacing

\numberwithin{equation}{section}
\newtheorem{theorem}{Theorem}
\newtheorem{proposition}[theorem]{Proposition}
\newtheorem{lemma}[theorem]{Lemma}

\theoremstyle{definition}

\newtheorem{example}[theorem]{Example}
\newtheorem{remark1}[theorem]{Remark}
\newtheorem{openproblem1}[theorem]{Open problem}

\newenvironment{remark}{\begin{remark1}\rm}{\end{remark1}}

\numberwithin{theorem}{section}

\newcommand{\C}{\mathbb{C}}

\newcommand{\R}{\mathbb{R}}

\newcommand{\cA}{\mathcal{A}}
\newcommand{\cB}{\mathcal{B}}

\newcommand{\cX}{\mathcal{X}}


\DeclareMathOperator{\diag}{diag}

\DeclareMathOperator{\tr}{tr}

\DeclareMathOperator{\pos}{Pos}
\DeclareMathOperator{\den}{D}
\DeclareMathOperator{\her}{Herm}

\newcommand{\domain}{\bigoplus_{i=1}^N L(\mathcal{A}_i)}

\DeclareMathOperator{\id}{\mathbb{I}}


%

\begin{document}
\title[Semidefinite network games]{Semidefinite network games: multiplayer minimax and  complementarity problems }

\author[C.~Ickstadt]{Constantin Ickstadt}

\author[T.~Theobald]{Thorsten Theobald}

\address{Constantin Ickstadt, Thorsten Theobald:
	Goethe-Universit\"at, FB 12 -- Institut f\"ur Mathematik,
	Postfach 11 19 32, 60054 Frankfurt am Main, Germany}

\email{ickstadt@math.uni-frankfurt.de,
	theobald@math.uni-frankfurt.de}

\author[E.~Tsigaridas]{Elias Tsigaridas}

\address{Elias Tsigaridas: Sorbonne Universit\'e, 
    Paris University, CNRS and Inria Paris.
	IMJ-PRG,  4 place Jussieu,
	75252 Paris Cedex 05, France}
	
\email{elias.tsigaridas@inria.fr}

\author[A.~Varvitsiotis]{Antonios Varvitsiotis}

\address{Singapore University of Technology and Design (ESD Pillar), 8 Somapah Road 487372, Singapore}

\email{antonios@sutd.edu.sg}




\begin{abstract}
Network games provide a powerful framework for modeling agent interactions 
in networked systems, where players are represented by nodes in a graph and 
their payoffs depend on the actions taken by their neighbors.   
Extending the framework of network games, we introduce and study 
semidefinite network games. In this model, each player selects a 
positive semidefinite matrix with trace equal to one, known as a 
density matrix, 
to engage in a two-player game with every neighboring node.	
The player's payoff is the cumulative payoff acquired from these edge games.
Initially, we focus on the zero-sum setting, where the sum of all players'
payoffs is equal to zero. We establish that, in this class of games, Nash
equilibria can be characterized as the projection of a spectrahedron.
Furthermore, we show that determining whether a semidefinite network game 
is a zero-sum game is equivalent to deciding if the value of a semidefinite
program is zero. Beyond the zero-sum case, we characterize Nash equilibria
as the solutions of a  semidefinite linear complementarity problem.
\end{abstract}
	
\maketitle

	\section{Introduction}
	\label{sec:intro}
	In normal form games, agents use probability distributions to select one action from a finite set, while their utility functions are  multilinear. In a broader context, continuous games enable agents to choose from an infinite set of pure strategies, often constrained to a compact set. Computationally, continuous games involve calculations with polynomial roots instead of just manipulating rational numbers, and their solvability relies on algorithmic techniques that go beyond linear programming.

	The network version of  normal form  games was introduced
by Janovskaja under the
name \emph{polymatrix game} \cite{Janovskaja-polymatrix-68}
and the connection to linear programming first appeared in \cite{BreFok-Opt-87,BreFok-Opt-98}.
A polymatrix game is called \emph{zero-sum} if, for each mixed strategy profile of the players, the sum of
all players' payoffs is zero.
Br\`egman and Fokin \cite{BreFok-Opt-98}
transform the zero-sum polymatrix game to a two-player zero-sum polyhedral game, which in turn
corresponds to an exponential size linear program.
Daskalakis and Papadimitriou \cite{DasPap-network-NE-09} consider the subclass of pairwise zero-sum polymatrix games, 
where each edge corresponds to a zero-sum game. To compute the Nash equilibria
they carefully round the optimal solution of a linear program.
Cai and Daskalakis \cite{CaiDas-multi-NE-11}
 among other results, directly relate
the equilibria of any polymatrix zero-sum game to a linear program.
Cai et al. \cite{ccdp-polymatrix-16} show how to compute the equilibria of a zero-sum
polymatrix game using a polynomial size linear program and how to recognize
in polynomial time whether a polymatrix game is zero-sum. 
They also show, using a nontrivial transformation, that
for every zero-sum polymatrix game there exists a
payoff-equivalent polymatrix game with constant-sum edges
and thus there exists a polymatrix game with zero-sum edges
which has the same set of Nash equilibria.
That work is one
of the two points of departure of our work.

Our second point of departure is the class of \emph{semidefinite games}
\cite{sdp-games}. 
In these games, the strategy set of each
player consists of a symmetric or Hermitian 
positive semidefinite matrix with trace one, commonly
referred to as density matrix, and the payoff
functions are bilinear; see 
Section \ref{su:sdpgames} for a formal definition. 
	Semidefinite games  fall within the class  of continuous games where the strategy space is a convex compact set and the payoffs are concave functions (in this case multilinear),
see, \cite{continuous,rosen}.
Semidefinite games can also be seen as the simplest
model of a two-player quantum game (see Section~\ref{se:quantum-games}).

For two-player zero-sum semidefinite games, we can efficiently compute the optimal strategies using semidefinite programming, 
 see \cite{jain-watrous-2009,sdp-games}.
 Moreover, \cite{sdp-games} establishes a strong connection—amounting to an almost equivalence—between two-player zero-sum semidefinite games and semidefinite programming, thus extending Dantzig’s seminal work on the near equivalence between bimatrix games and linear programming. This work,
 that is \cite{sdp-games}, also provides constructions of two-player semidefinite games with multiple Nash equilibria. In contrast, for semidefinite games with more than two players, \cite{sdp-games} does not offer new information beyond noting some immediate consequences derived from broader, more well-established classes of games.

Building on the concepts of polymatrix games and semidefinite games, we introduce and study a multiplayer class of games called \emph{semidefinite network games}, where players are interconnected through an undirected graph $G$ and each edge represents a two-player semidefinite game. 
The following table explains that we complete the square.
The analogy between normal form (simplex-based) and semidefinite games, under both global and networked interaction structures, is summarized in the table.

\begin{table}[h]
\centering
\begin{tabular}{|c|c|c|}
\hline
\diagbox{Strategies}{Interaction} & Global & Network structure \\
\hline
Simplex & Normal form games & Polymatrix games \\
\hline
Density matrices & Semidefinite games & This work \\
\hline
\end{tabular}
\end{table}

 A formal definition of semidefinite network games appears in \Cref{su:sdp-network-games}.
In this setting, each player selects a single density matrix, that she uses to play all the games associated with her neighboring nodes,
and her payoff is the cumulative payoff acquired from these edge games.

Since each player's strategy space is convex and compact, an equilibrium point always exists (see, e.g., \cite{Glicksberg-fp-1952}). However, beyond this existence result, addressing further relevant questions in semidefinite network games requires the development of novel methods.
We establish the following three~results:
	\begin{enumerate}
		\item In a zero-sum semidefinite network game, we can efficiently compute 
		Nash equilibria using  semidefinite programming.
		\item  A zero-sum semidefinite network game and, similarly, a constant-sum semidefinite network game, 
        can be recognized using semidefinite programming.
		\item Beyond the zero-sum case, semidefinite Nash equilibria are solutions to a semidefinite linear complementarity problem. 
	\end{enumerate} 
As is common in the study of network games (see, e.g., \cite{ccdp-polymatrix-16} for the classical setting), the zero-sum property in our context refers to the aggregate payoff across all players, rather than to the individual payoffs in each edge game.
Let us also mention that several related classes of continuous games involving semidefinite structures have already been investigated in the literature; see, for instance, \cite{mbn-2017,sfp-2014,scutari-palomar-2010}.

\section{Preliminaries \label{sec:prelim}}	

\subsection{Linear algebra.} First, we give a brief introduction to the linear algebraic notions that we use in our study; our notation follows closely \cite{watrous}.

  A complex Euclidean space is a finite-dimensional vector space over complex numbers (i.e.,  isomorphic to $\mathbb{C}^k$ for some $k\ge 1$)
 equipped with  the standard Euclidean inner product. 
We denote by $\cA = \C^{k}$ and we denote by $L(\cA)$  the space of linear maps $A: \cA \to \cA$. We use $A$ to represent  both the operator and its corresponding matrix representation, assuming a specific choice of coordinates.
	A Hermitian matrix  $A\in\her (\cA)$ is 
	\emph{positive semidefinite} (PSD), denoted $A\succeq 0$,  if all its eigenvalues are nonnegative. The set of PSD matrices acting on $\cA$  is denoted by $\pos(\cA)$. The set of density matrices acting on $\cA$ is the set of PSD matrices with trace equal to one, i.e., 
	\begin{equation}
		\label{eq:def-DA}
		\den(\cA)=\Big\{ X\in  \her(\cA): X\succeq 0 \text{ and }  \tr(X)=1 \Big\}.
	\end{equation}
	The set of density matrices is convex, compact, and its extreme points are rank-one density matrices, i.e., matrices of the form  $X=xx^\dagger$, where $\|x\|=1$.
	For any Hermitian  matrix $A\in \her (\cA)$ we have that 
	\begin{equation}\label{lambdamax}
		\begin{aligned}
			\lambda_{\max}(A)&=\max \{ \langle A, X\rangle : X\in \den(\cA) \}, \\
			\lambda_{\min}(A)& =\min \{ \langle A, X\rangle : X\in \den(\cA) \}.
		\end{aligned} 
	\end{equation}
	This well-known fact follows from the Rayleigh-Ritz characterization of the maximum (resp. minimum)  eigenvalue of a Hermitian matrix $A$, i.e., 
	$$\lambda_{\max}(A)=\max \{ x^\dagger A x: \|x\|=1\},$$ combined with the characterization
	of the extreme points of $\den(\cA)$ given above.
	Finally, for any $A\in\her(\cA)$ we repeatedly use that 
	$A\preceq t\id$ if and only if all the eigenvalues of $A$ are at most $ t$,
 where $\id$ is the identity matrix.

	Let $\cA = \C^{n}$ and $\cB = \C^m$ be arbitrary fixed complex Euclidean spaces. 
 We define the \emph{partial trace},
	$\tr_{\cA} = \tr \otimes \id_{\cB}$, as the map
	\begin{equation}
		\label{eq:partial-trace}
		\begin{array}{ccccl}
			\tr_{\cA} & : & L(\cA \otimes \cB) &\to& L(\cB) \\
			& & A\otimes B & \mapsto & \tr_{\cA}(A\otimes B) = \tr(A)\, B ,
		\end{array}
	\end{equation}
 where $\id_{\cB}$ is the identity matrix on $L(\cB).$
	The adjoint operator of $\tr_{\cA}$ is 
	\begin{equation}
		\label{eq:partial-trace-ajdj}
		\begin{array}{ccccl}
			\tr_{\cA}^{\dagger} & : & L(\cB) &\to& L(\cA \otimes \cB) \\
			& & B & \mapsto & \tr^{\dagger}_{\cA}(B) = \id_\cA \otimes B.
		\end{array}
	\end{equation}

	We denote by $T(\cA, \cB)$ the space of linear maps  
	$\Phi : L(\cA) \to L(\cB)$, known as superoperators. There is a well-known bijection between $T(\cA, \cB)$ and  $L(\cB \otimes \cA)$ given~by
	\begin{equation}
		\label{eq:CJ-map}
		\begin{array}{ccccl}
			J & :& \ T(\cA, \cB) &\to& L(\cB \otimes \cA) \\
			& & \Phi &\mapsto& J(\Phi) = \underset{i \in [n],j \in [m]}{\sum} \Phi(E_{i,j}) \otimes E_{i,j} ,
		\end{array}
	\end{equation}
	where $E_{i,j}=e_ie_j^\dagger$ 
 is the operator that sends the canonical basis element $e_j$ to the 
	canonical basis element  $e_i$.
	The corresponding matrix $J(\Phi)$  is  called the Choi representation  of $\Phi$, see,
	e.g., \cite{watrous}. Moreover,  the action of $\Phi\in T(\cA, \cB)$ (on an element $A \in L(\cA)$) can be recovered from its   Choi representation  $J(\Phi)$ through
	\begin{equation}
		\label{eq:CJ-reverse-map} 
		\Phi(A) = \tr_{\cA}\left( J(\Phi) \, (\id_{\cB} \otimes A^{\top}) \right),
	\end{equation}
	e.g., see \cite[Equation 2.66]{watrous}.
	We can express many properties of  a superoperator $\Phi$ in terms of the Choi representation~$J(\Phi)$.  The following two properties, e.g., see Theorem~2.25 and  Corollary~2.27 in \cite{watrous},
	are related to our study:
	\begin{itemize}
		\item   $J(\Phi)$ is a Hermitian matrix if and only if 
		$\Phi : L(\cA) \to L(\cB)$ is Hermitian preserving, i.e., $\Phi(X)$ is Hermitian whenever  $X$ is Hermitian.
		\item   $J(\Phi)$ is PSD if and only if $\Phi$ is completely positive, i.e.,  for  any integer $k\ge 1$ the map 
		$\mathbb{I}_k\otimes \Phi: L(\mathbb{C}^k\otimes \cA)\to L(\mathbb{C}^k\otimes \cB)$ maps  PSD matrices  to PSD matrices,
		where $\mathbb{I}_k$ is the identity map on the $k$-dimensional
		complex Euclidean space~$\mathbb{C}^k$. 
	\end{itemize}

	\subsection{Semidefinite programming}We consider complex Euclidean spaces $\cA, \cB$,  operators $C\in \her(\cA), B\in \her (\cB)$ and a Hermitian preserving superoperator  $\Phi\in T(\cA, \cB)$. The triple $(C,B,\Phi)$  specifies a   pair of primal/dual semidefinite programs~(SDPs):
	\begin{equation}\label{SDP}
		\begin{aligned}
			\sup &\  \langle C, X\rangle \\
			\text{ s.t.} &\ \Phi(X)=B, \\
			&\  X\in \pos(\cA), 
		\end{aligned} 
		\quad \quad 
		\begin{aligned} 
			\inf & \ \langle B,Y\rangle \\
			\text{ s.t.} & \ \Phi^\dagger (Y)\succeq C, \\
			& \ Y\in \her(\cB).
		\end{aligned}
	\end{equation}
Here, $\langle \cdot, \cdot \rangle$ is the Frobenius scalar product,
	$\langle A, B \rangle := \tr(A^{\dagger} \, B) = \sum_{i,j} \overline{a_{ij}} b_ {ij}$.
	SDPs represent a broad generalization of linear programming, offering significant expressive capabilities and efficient algorithms for their solution.
	The feasible region of an SDP is called a spectrahedron and the projection of a spectrahedron is called an SDP-representable set. 
	There has been significant recent interest in determining whether a convex set, e.g.,  a polyhedron, is SDP-representable, e.g., see \cite{scheiderer-2018}.
	
\section{The Model}

\subsection{Semidefinite games\label{su:sdpgames}}

  In a \emph{two-player semidefinite game} \cite{sdp-games}, 
  the strategy sets of the two players
  are described by density matrices $X_1$ and $X_2$ acting on complex 
  Euclidean spaces $\mathcal{A}_1$ and $\mathcal{A}_2$, respectively.
  Recall that a density matrix is a positive semidefinite matrix
  with trace equal to one.
The payoffs of the two players are given by the bilinear functions
  \begin{equation}
  \label{eq:bilinear1}
  u_1(X_1,X_2)=\langle R_1, X_1\otimes X_2\rangle\  \text{ and } \ u_2(X_1,X_2)=\langle R_2, X_1\otimes X_2\rangle,
  \end{equation}
  where \( R_1 \) and \( R_2 \) are Hermitian matrices acting   on the tensor product space \(\mathcal{A}_1 \otimes \mathcal{A}_2\). 
Note that as $R_i$ are  Hermitian, it is  ensured that the inner products $\langle R_i, X_1\otimes X_2\rangle$ result in real numbers.
Finally, a  strategy profile $(X_1, X_2)\in \den(\cA_1)\times \den(\cA_2)$   is a
	\emph{Nash Equilibrium}  if each player is responding optimally to the strategy of the other, i.e., 
	\begin{equation}
	\begin{aligned}
u_1(X_1, X_2) \ge u_1(X'_1, X_2), \ \forall X'_1\in \den(\cA_1) \\
\text{ and } \ u_2(X_1, X_2) \ge u_2(X_1, X'_2), \  \forall X'_2\in \den(\cA_2).
\end{aligned}
	\end{equation}

\begin{remark}
\label{rem:indices1}
Spelling out the indices in~\eqref{eq:bilinear1}, say, for a real $R$, we can write
$(R_t) = (R_t)_{ijkl}$ for $t \in \{1, 2\}$ and obtain
\[
  u_t(X_1,X_2) = 
  \sum_{\substack{
    1 \le i,j \le |\cA_1| \\
    1 \le k,l \le |\cA_2|
    }} (X_1)_{ij}  (R_t)_{ijkl} (X_2)_{kl} \, .
\]
\end{remark}

Bimatrix games can be seen as a special case of semidefinite games
(see \cite[Lemmas 4.2 and 7.1]{sdp-games}).
The entry $(i,j)$ of a payoff matrix is put into entry $(i,i,j,j)$ of
the payoff tensor and all other entries of the payoff tensor are set to 0.
Then, the mixed strategies in the bimatrix game are given by
diagonals of the density matrices in the semidefinite games. 
The non-diagonal elements of the density matrices can take any values,
as long as the positive semidefiniteness condition is satisfied. In other
words, if the players play non-diagonal density matrices $X_1$ and $X_2$,
then their payoffs are the same as with $\diag(X_1)$ and $\diag(X_2)$.

\subsection{Semidefinite network games\label{su:sdp-network-games}}		
	A \emph{semidefinite network game} is a non-coope\-ra\-tive $N$-player game taking place on an undirected graph $G=([N], E)$.  
	The strategy space of player $i\in [N]$ is the set of  
	density matrices acting on a complex Euclidean space  $\cA_i$, i.e., $\cX_i=\den(\cA_i)$. 	The set of strategy profiles $\cX = \prod_{i=1}^{N} \cX_{i}$ is the Cartesian product of the individual strategy spaces.
	Thus,  any strategy profile  $X\in \cX $ is simply a collection of density matrices, one for every node of the underlying graph, i.e.,   $X=(X_1,\ldots, X_N),$ where $X_i\in \cX_i$.
	
	In the setting of semidefinite network games, each player {\em selects a single density matrix}, which is used for playing a two-player semidefinite game  with each neighboring node.
	As typically done in network   contexts (e.g., see \cite{ccdp-polymatrix-16}),
	the payoff of each player is the cumulative payoff  acquired from these edge games.
	Concretely, in the strategy profile  $(X_1,\ldots, X_N)\in \cX$, player  $i$ is using  $X_i\in \cX_i$ to play each two-player semidefinite game corresponding to an edge $(i,j)\in E$. The  payoffs of players $i$ and $j$    in the  game corresponding  to $(i,j)\in E$ are,  respectively, given by
	$$p_{i,j}(X_i, X_j)=\langle R_{ij}, X_i\otimes X_j\rangle \ \text{ and } \ p_{j,i}(X_i, X_j)=\langle R_{ji}, X_i\otimes X_j\rangle,$$
	for some Hermitian matrices  $R_{ij} , R_{ji} \in \her(\cA_i\otimes \cA_j)$.
	 The payoff of player $i$ in the semidefinite network game is the sum of  the payoffs accrued from all the edge games in which they participate,~i.e., 
	\begin{equation*}
		p_{i}(X) = \sum_{(i,j) \in E} p_{i,j}(X_i, X_j).
		\label{eq:payoff-i}
	\end{equation*}
	
	The game is called \emph{zero-sum} if  $\sum_{i=1}^{N} p_{i}(X) = 0$, for all $X \in \cX$. Finally, a  profile $(X_1,\ldots, X_N)\in \cX$   is a 
	\emph{Nash Equilibrium}  if for each player $i$, the density matrix $X_i$ is a best response to $X_{-i}=(X_1,\ldots, X_{i-1}, X_{i+1}, \ldots, X_N)$, i.e., 
	\begin{equation}\tag{NE}
		X_i \in  \arg \max \{ p_i(Y_i,X_{-i}): Y_i\in \cX_i\}, \ \ \forall i\in [N].
	\end{equation}

	As the strategy space of each player  is convex and compact,
 	there always exists an equilibrium point, see, e.g., \cite{Glicksberg-fp-1952}. However, beyond this mere existence of an equilibrium,
 	to answer other relevant questions in semidefinite network games we need novel methods.

	\begin{example}
		Let $G=([N],E)$ be a connected graph with $|E| \ge 1$ and for any 
		edge $e \in E$ 
		let the two-player semidefinite
		game $\Gamma_e$ associated with edge $e$ be a constant-sum game.
		Then, the semidefinite network game is a constant-sum game, i.e., there
		exists a constant $C \in \R$ such that
		$\sum_{i=1}^{N} p_i(X) = C$ for every $X \in \mathcal{X}$.
		This game can be transformed into a zero-sum network semidefinite 
		game as follows.
		For any $i \in [N]$, consider the neighbors $j$ of $i$
		and construct, for $e = (i,j)$,
		from $\Gamma_e$
		the game $\Gamma'_e$ by considering
		$(p')_{i,j}(X_i,X_j) = p_{i,j}(X_i,X_j) - \frac{C}{N \deg i}$, 
		where $\deg i$ is the degree
		of the node $i$ in $G$. The game $\Gamma'_e$ can be formulated
		as a two-player semidefinite game. 
		
		The resulting network game satisfies
		\[
                      \sum_{i=1}^N p_i'(X) = \sum_{i=1}^N \sum_{(i,j) \in E}
		(p')_{i,j}(X_i,X_j)  
		= (\sum_{i=1}^N \sum_{(i,j) \in E} (p_{i,j}(X_i,X_j)) - C = 0,
                     \]
		and hence, $\Gamma'$ is a zero-sum semidefinite game.
		If $G$ is a non-connected graph with $|E| \ge 1$,
		then the normalization step to a zero-sum game
		can be carried out on an arbitrary component
		with at least one edge.
	\end{example}
	
	\begin{example}
		Let $\hat{\Gamma}$ be a zero-sum network matrix game.
		By embedding the pure strategies of the edge games into the diagonal
		entries of the payoff tensors of a corresponding semidefinite game, we obtain
		a zero-sum semidefinite game $\Gamma$. To each edge game of $\Gamma$,
		adding a two-player zero-sum semidefinite game preservers the property
		of a zero-sum semidefinite game. 
		
		For example, let $\hat{\Gamma}$ be the
		following security game considered in \cite{CaiDas-multi-NE-11}. 
		Let $G$ be a complete bipartite graph, where the nodes refer to
		evaders and inspectors. Every evader and every inspector can choose
		one of several given exits, which correspond to the pure strategies. 
		If an evader's exit is not chosen by an inspector, then the evader
		obtains one unit. For every evader whose exit is inspected,
		the inspector obtains one unit. This is a constant-sum network matrix
		game, which can be turned into a zero-sum game and viewed as
		a semidefinite network game as explained before. 
		Adding to each payoff tensor 
		of an edge game the payoff tensor of an arbitrary zero-sum semidefinite
		game with the same dimensions retains the zero-sum property
		of the semidefinite game.
	\end{example}

	\section{Nash equilibria in semidefinite network games }
	
	In this section, we generalize the computation 
	of the Nash equilibria in polymatrix games from \cite{ccdp-polymatrix-16} 
	to the case of semidefinite network games. Denoting the pure strategy sets
	of the polymatrix game by $S_1, \ldots, S_N$ and the Cartesian product
	of all mixed strategies of $\Delta$, they showed: 
	
	\begin{proposition}
              \label{pr:nash-polymatrix1}
		Consider a zero-sum polymatrix game $G$.
		If $(y,w)$ is an optimal solution to the LP
		\begin{equation}
		  \label{eq:polymatrix-lp1}
		  \begin{array}{rcl}
		  \multicolumn{3}{l}{\min_{y,w} \sum_{i \in [N]} w_i} \\
		  \text{s.t. }  w_i & \ge & u_i(s,y_{-i}) \; \ \text{ for } 
		  s \in S_i, \,
		  i \in [N], \\ 
		   y & \in & \Delta
		  \end{array}		    
		\end{equation}
		then $y$ is a Nash equilibrium of $G$. Conversely, if 
		$y$ is Nash equilibrium of $G$, then there is a $w$ such that
		$(y,w)$ is an optimal solution of the 
		LP~\eqref{eq:polymatrix-lp1}.
	\end{proposition}

	In \cite{ccdp-polymatrix-16}, two proofs for this result are given: one proof based on a best response
	characterization and Nash's Theorem and one proof based on LP duality. For our generalization of 
	Proposition~\ref{pr:nash-polymatrix1} to the semidefinite network case, the proof based
	on LP duality is the relevant one.
	Furthermore, in \cite{ccdp-polymatrix-16} it was shown that recognizing whether
	a given polymatrix game is zero-sum can be done in polynomial time. The generalization
	of this recognition result to the semidefinite network case appears in Section~\ref{se:recognize}.

\subsection{Operator representation of semidefinite network games}
	
	To prepare compact notation for the generalization of Proposition~\ref{pr:nash-polymatrix1} to the
	semidefinite network
	setting, we start from a useful operator representation 
	of the bilinear payoff function, corresponding to the semidefinite game between players 
	$i$ and $j$. 
    
	\begin{lemma}\label{payoffwithphi}
		Consider  the semidefinite game between players 
	$i$ and $j$ with payoff functions 
		$$p_{i,j}(X_i,X_j)=
		\langle R_{ij},X_i\otimes X_j \rangle,$$
		where $R_{ij}\in \her(\cA_i\otimes \cA_j)$. 
		Then, there exist linear maps  
  $\Phi_{ij}: L(\cA_j)\to L(\cA_i)$, so~that 
  	\begin{equation}\label{payoffoperaator}
			p_{i,j}(X_i,X_j)=\langle X_i, \Phi_{ij}(X_j) \rangle.
		\end{equation}
		We refer to the  maps $\Phi_{ij}$ as the payoff operators.
	\end{lemma}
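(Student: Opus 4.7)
The plan is to derive the identity \eqref{payoffoperaator} directly from the Choi--Jamio{\l}kowski reverse formula \eqref{eq:CJ-reverse-map}, together with the adjointness relation \eqref{eq:partial-trace-ajdj} and cyclicity of the trace. So the computation will have three short steps, each rewriting one side of the desired equality into a more convenient form.

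First, since $R_{ij}$ is by hypothesis the Choi representation of $\Phi_{ij}\colon L(\cA_j)\to L(\cA_i)$, I would apply \eqref{eq:CJ-reverse-map} with input $A = X_j^{\top}$, using that $(X_j^{\top})^{\top} = X_j$. This immediately yields
\begin{equation*}
\Phi_{ij}(X_j^{\top}) \;=\; \tr_{\cA_j}\!\bigl(R_{ij}\,(\id_{\cA_i}\otimes X_j)\bigr).
\end{equation*}
Second, I would take the Frobenius inner product with $X_i$ and move $X_i$ inside the partial trace using adjointness. Concretely, the adjoint of $\tr_{\cA_j}\colon L(\cA_i\otimes\cA_j)\to L(\cA_i)$ sends $Y$ to $Y\otimes \id_{\cA_j}$, as in \eqref{eq:partial-trace-ajdj}, so that $\langle X_i,\tr_{\cA_j}(Z)\rangle = \langle X_i\otimes\id_{\cA_j},Z\rangle$ for every $Z\in L(\cA_i\otimes\cA_j)$. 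Applied with $Z = R_{ij}(\id_{\cA_i}\otimes X_j)$, this turns $\langle X_i,\Phi_{ij}(X_j^{\top})\rangle$ into $\tr\bigl((X_i\otimes \id_{\cA_j})\,R_{ij}\,(\id_{\cA_i}\otimes X_j)\bigr)$.

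Third, I would use cyclicity of the trace together with the elementary tensor identity $(X_i\otimes \id_{\cA_j})(\id_{\cA_i}\otimes X_j) = X_i\otimes X_j$ to collapse the two outer factors into a single $X_i\otimes X_j$, producing $\tr(R_{ij}(X_i\otimes X_j)) = \langle R_{ij},X_i\otimes X_j\rangle = p_{i,j}(X_i,X_j)$, which is the claimed equality.

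There is no real obstacle: the proof is a one-line manipulation once the Choi formula \eqref{eq:CJ-reverse-map} is plugged in. The only point requiring care is a bookkeeping one, namely that in $R_{ij}\in L(\cA_i\otimes\cA_j)$ the factor to be traced out is $\cA_j$ (in the second slot), which is the same convention the paper already uses for $\tr_\cA$ when it applies to $L(\cB\otimes\cA)$ inside \eqref{eq:CJ-reverse-map}; once this convention is fixed, every step is a routine consequence of the definitions recalled in Section~\ref{sec:prelim}.
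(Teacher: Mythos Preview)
Your proof is correct and follows essentially the same approach as the paper's: both use the Choi--Jamio{\l}kowski reverse formula \eqref{eq:CJ-reverse-map}, the adjointness of the partial trace \eqref{eq:partial-trace-ajdj}, and cyclicity of the trace, with the only difference being that you traverse the chain of equalities from $\langle X_i,\Phi_{ij}(X_j^\top)\rangle$ toward $p_{i,j}(X_i,X_j)$, whereas the paper starts from $p_{i,j}(X_i,X_j)$ and works toward $\langle X_i,\Phi_{ij}(X_j^\top)\rangle$.
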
 

\begin{remark}
The introduction of $\Phi_{ij}$ facilitates a compact notation for the semidefinite network setup.
In view of Remark~\ref{rem:indices1}, say, for a real $R_{ij}$, we have $R_{ij} = (R_{ij})_{stkl}$ 
and an explicit description for~\eqref{payoffoperaator} would then be
\[
  p_{i,j}(X_i,X_j) = \left\langle X_i, \left( \sum\nolimits_{1 \le k,l \le |\cA_j|} (R_{ij})_{stkl} (X_j)_{kl} \right)_{1 \le s,t\le |\cA_i|} \right\rangle \, .
\]
\end{remark}

	\begin{proof} 
 Let $\Psi_{ij}: L(\cA_j)\to L(\cA_i)$ be  
   the linear map whose Choi  representation is $R_{ij}$. Note that
		$$ 
		\begin{aligned}
			p_{i,j}(X_i,X_j)& = \langle R_{ij},X_i\otimes X_j \rangle=\langle X_i\otimes \id_{\cA_j} ,R_{ij}(\id_{\cA_i}\otimes {X_j})\rangle\\
			&=\langle \tr_{\cA_j}^\dagger(X_i),R_{ij}(\id_{\cA_i}\otimes {X_j})\rangle= 	\langle X_i,\tr_{\cA_j}(R_{ij}(\id_{\cA_i}\otimes {X_j}))\rangle \\
                                &= \langle X_i, \Psi_{ij}(X_j^\top) \rangle,\end{aligned} 
		$$
		where the second equality follows by the cyclicity of trace, the third equality from~\eqref{eq:partial-trace-ajdj}, and  the last equality follows from \eqref{eq:CJ-reverse-map}.  
   Setting $${\Phi}_{ij}: L(\cA_j)\to L(\cA_i), \quad X_j \mapsto \Psi_{ij}(X^\top_j),$$
   the proof is concluded. 
	\end{proof} 
	
	For each player $i$,  let  $N(i)$ be the open neighborhood of $i$ in the network. Moreover, for each strategy profile $X=(X_1,\ldots, X_N)\in \cX$ we set $X_{N(i)}=(X_j: j\in N(i))$ as the vector of strategies of all neighbors of $i$. To keep our derivations compact, we   define the linear map
	$\Phi_{i} : \underset{j\in N(i)}{\bigoplus} L(\cA_j)\to L(\cA_i)  $
	where
	\begin{equation}\label{phimap}
		\Phi_{i}(X_{N(i)})= \sum_{j \in N(i)}  \Phi_{ij}(X_j).
	\end{equation}
	Although each $\Phi_i$ only depends on the density matrices  chosen by the neighbors of $i$, it will be convenient to view $\Phi_i$  also as  a function from $\domain$.
	Combining Lemma~\ref{payoffwithphi} with the definition of $\Phi_i$,  we can write the payoff of player $i$  as a linear function of their strategy $X_i$ explicitly, i.e., 
	\begin{equation}\label{utilitywithphi}
		p_i(X)  = \sum_{j: (i,j) \in E} p_{i,j}(X_i, X_j)
		= \sum_{j: (i,j) \in E} \langle X_i, \Phi_{ij}(X_j) \rangle 
		= \langle X_i, \Phi_{i}(X_{N(i)}) \rangle.
	\end{equation}
	Finally, we define the direct  sum of the linear maps $\Phi_i$, i.e., 
	\begin{equation}\label{gameoperator}
		\Phi: \domain \to \domain \ \text{ where }  (X_1, \ldots, X_N)\mapsto  (\Phi_1(X),\ldots, \Phi_N(X)).
	\end{equation}
	As the game is zero-sum,  for each strategy profile $X=(X_1,\ldots, X_N)\in \cX$ we have  
	\begin{equation}\label{sumpayoffs}
		\sum_ip_i(X)=\sum_i\langle X_i, \Phi_i(X)\rangle=0.
	\end{equation}
	Combining \eqref{gameoperator} and  \eqref{phimap}, \eqref{sumpayoffs}  can be also written as 
	\begin{equation}\label{zerosum}
		\langle X, \Phi(X)\rangle =0 \text{ for all } X\in \cX.
	\end{equation}
	Next, set   $\Psi=\Phi^\dagger$ and note that
	\begin{equation}\label{psimap}
		\begin{aligned} 
			& \Psi: \domain \to \domain, \quad
			(Y_1,\ldots, Y_N)\mapsto (\Psi_1(Y), \ldots, \Psi_N(Y)),
		\end{aligned} 
	\end{equation}
           where $\Psi_i(Y)=\sum_{j\in N(i)}\Phi_{ji}^\dagger (Y_j)$.
	Indeed, on the one hand we have 
	$$\langle \Psi(Y_1,\ldots, Y_N), (X_1,\ldots, X_N)\rangle=\sum_i\langle  \Psi_i(Y), X_i\rangle, $$
	and on the other hand
	\begin{eqnarray*}
		& & \langle \Psi(Y_1,\ldots, Y_N), (X_1,\ldots, X_N)\rangle =\langle (Y_1,\ldots, Y_N), \Phi(X_1,\ldots, X_N)\rangle\\
		& = & \sum_i\langle Y_i, \Phi_i(X)\rangle 
		= \sum_i\sum_{j \in N(i)} \langle Y_i, \Phi_{ij}(X_j)\rangle 
		=  \sum_i \langle  \sum_{j\in N(i)} \Phi^\dagger_{ji} (Y_j), X_i\rangle.
	\end{eqnarray*}
	Finally, as $\Psi=\Phi^\dagger$, \eqref{zerosum}  implies  that
	\begin{equation}\label{sumzero}
		\langle \Psi(X), X\rangle =0 \text{ for all } X\in \cX.
	\end{equation}
	
	With regard to the Nash equilibria 
	(as introduced in Section~\ref{su:sdp-network-games}),
	we next show that we can assume  that the payoff operators  are completely~positive.
	\begin{lemma}\label{nonnegativity}
		For any semidefinite network game there exists another semidefinite network game with the same Nash equilibria and completely  positive  payoff operators~$\Phi_{ij}$. 
		
	\end{lemma}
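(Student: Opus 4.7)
The plan is to add a suitable multiple of the identity to each Choi representation $R_{ij}$ so that it becomes positive semidefinite, while keeping each player's payoff unchanged up to an additive constant. Concretely, for every edge $(i,j) \in E$, I would choose a scalar $c_{ij} \geq \max\{0,\, -\lambda_{\min}(R_{ij})\}$ and set $R_{ij}' := R_{ij} + c_{ij}\,\id_{\cA_i \otimes \cA_j}$. By the choice of $c_{ij}$, the matrix $R_{ij}'$ is Hermitian and positive semidefinite, so the characterization of complete positivity via the Choi representation recalled in Section~3 implies that the associated operator $\Phi_{ij}'$ is completely positive, as required.

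Next, I would verify that this modification only adds a constant to each payoff. Because every $X_k$ is a density matrix, we have $\langle \id_{\cA_i \otimes \cA_j},\, X_i \otimes X_j\rangle = \tr(X_i)\tr(X_j) = 1$, and therefore $p_{i,j}'(X_i,X_j) = p_{i,j}(X_i,X_j) + c_{ij}$. Summing over $j \in N(i)$ yields $p_i'(X) = p_i(X) + \sum_{j \in N(i)} c_{ij}$, and the additive shift does not depend on $X_i$. Hence the best-response correspondence of each player $i$ is unchanged, and the sets of Nash equilibria of the original and modified games coincide.

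The argument is essentially a shift trick, so I do not expect a substantive obstacle. The one place where care is needed is the bookkeeping between the Choi side and the operator side: using \eqref{eq:CJ-reverse-map}, replacing $R_{ij}$ by $R_{ij} + c_{ij}\,\id$ corresponds to replacing $\Phi_{ij}$ by the operator $A \mapsto \Phi_{ij}(A) + c_{ij}\, \tr(A)\, \id_{\cA_i}$, which combined with $\tr(X_j) = 1$ yields exactly the constant shift in the payoff. Once this correspondence is made explicit, the lemma follows directly from the shift-invariance of best responses, with no further work needed.
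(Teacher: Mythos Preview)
Your proposal is correct and matches the paper's own proof almost exactly: shift each $R_{ij}$ by a multiple of the identity to make it PSD, observe that this adds only a constant to each player's payoff (since $\tr(X_i)\tr(X_j)=1$), and conclude via the Choi characterization that the new $\Phi_{ij}$ are completely positive. The only cosmetic difference is that the paper uses a single uniform constant $c$ for all edges whereas you allow edge-dependent $c_{ij}$, and you spell out the operator-level effect of the shift a bit more explicitly.
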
 
	\begin{proof}
		Consider a new semidefinite network game where we replace each payoff matrix $R_{ij}$ with $R_{ij}+c\id_{ij}$, where $\id_{ij}$ denotes the
		identity operator on $\cA_i\otimes \cA_j$ and
		$c$ is chosen such that $R_{ij}+c\id_{ij}\succeq 0$ for all $(i,j)\in E$.
		This ensures that the resulting operators are positive semidefinite.   In this modified game, the payoffs for each edge are nonnegative and differ by a constant $c$ from the payoffs in the original game, i.e., $$\langle R_{ij}+c\id_{ij}, X_i\otimes X_j\rangle =\langle R_{ij}, X_i\otimes X_j\rangle +c.$$
		Thus, the payoff of player $i$ in the new game differs by  $c\deg(i)$ compared to the original payoff.  Consequently, 
		the Nash equilibria in the old game and the new game~coincide. 
		
		Finally, in the new game, the payoff operator $\Phi_{ij}$ is the linear map whose Choi representation is $ R_{ij}+c\id_{ij}$. As the latter matrix is PSD, it follows that $\Phi_{ij}$ is completely  positive. 
	\end{proof} 
	
\subsection{Semidefinite representability of Nash equilibria
   in semidefinite network games}
	
	Recall that in a semidefinite network game, 
	a  strategy  profile $(X_1,\ldots, X_N)\in \cX$  is a 
	Nash Equilibrium (NE) if for each player $i$, the density matrix $X_i$ is a best response to $X_{-i}$. For any profile $X=(X_1,\ldots, X_N)\in \cX$ and
	$i \in N$ define $e_i(X)$ as the maximum possible gain that they can achieve by deviating    from $X_i$ while the strategies of the other
	players remain fixed, i.e., 
	$$e_i(X)=\max_{S_i\in \cX_i}\langle S_i, \Phi_i(X_{N(i)})\rangle-\langle X_i, \Phi_i(X_{N(i)})\rangle.$$ 
	$e_i(X)$ is known as the \emph{Nikaido-Isoda gap} \cite{nikaido-isoda-1963}, or {\em exploitability} \cite{exploitability}.
	Then, for any strategy profile  $X=(X_1,\ldots, X_N)\in \cX$  we have that
	$$X\text{ is a NE}  \Longleftrightarrow e_i(X)=0 \text{ for all } \ i\in [N].$$
	Finally, noting that the Nikaido-Isoda gap is always nonnegative, we have that 
	$$X\text{ is a NE}  \Longleftrightarrow \sum_ie_i(X)=0 \text{ for all } \ i\in [N].$$
	Next, define  
	$$w_i(X)=\max_{S_i\in \cX_i}\langle S_i, \Phi_i(X_{N(i)})\rangle.$$
	Recall that  for a  zero-sum game we have  $\langle X, \Phi(X)\rangle=0$ (see \eqref{zerosum}), and consequently, for any $X\in \cX$ we have
	$$\sum_i e_i(X)=\sum_iw_i(X).$$
	Putting everything together, for a zero-sum semidefinite network game we have that 
	\begin{equation}\label{exploitability}
		X\text{ is a NE}  \Longleftrightarrow \sum_iw_i(X)=0 \text{ for all } \ i\in [N].
	\end{equation}
	This discussion leads to the following  characterization  of Nash equilibria in the zero-sum case.

	\begin{theorem}
               \label{th:nash-network1}
		Consider  a zero-sum semidefinite network game  $\Gamma$ with the payoff operators  $\{\Phi_{ij}\}_{i,j\in [N]}$  and let ${\rm NE}(\Gamma)$ be the set of Nash equilibria.   Let  $P^*(\Gamma)$  be the set of optimal solutions
		of the following semidefinite program\begin{equation}\label{primal}\tag{$P(\Gamma$)}
			\arraycolsep=1.4pt\def\arraystretch{1.3}
			\begin{array}{lrcl}
				\underset{w_i, X_i}{\min}     & \underset{i \in [N]}{\sum} w_{i}  \\
				{\rm s.t.} & w_{i}\id_{\cA_i} & \succeq & \Phi_i(X_{N(i)}), \\
				& \tr(X_i)&=&1, \\
				& X_i &\succeq & 0, \\
				& w &\in & \R^N, \: i \in [N], 
			\end{array}
		\end{equation}
		where  
		$\Phi_{i}(X_{N(i)})= \sum_{j \in N(i)}  \Phi_{ij}(X_j)$.  Then, the set of Nash equilibria is equal to the coordinate projection of the spectrahedron  $P^*(\Gamma)$ onto the $X$ coordinates, i.e., 
		$${\rm NE}(\Gamma)=\Big\{ X\in \cX: \exists w\in \R^N \text{ for which } (w,X)\in P^*(\Gamma)\Big\}.$$
		In particular,  the set of Nash equilibria of a zero-sum semidefinite network game is SDP-rep\-resen\-table. 
	\end{theorem}

Theorem~\ref{th:nash-network1} provides a generalization of the result for polymatrix games from Proposition~\ref{pr:nash-polymatrix1}.
To establish the generalization of the LP of this proposition, it is beneficial to work on the dual problem. The subsequent proof
employs a dual semidefinite problem (called $D(\Gamma)$ in the proof) which is the semidefinite version of the dual LP employed 
in the duality-based proof in~\cite[Theorem 1]{ccdp-polymatrix-16}, called DLP there.

	\begin{proof}
		First we consider the optimization problem 
		\begin{equation}\tag{$P'(\Gamma$)}
			\label{eq:poly-sdg-opt-primal}
			\arraycolsep=1.4pt\def\arraystretch{1.8}
			\begin{array}{lll}
				\underset{w,X}{\min}        & \underset{i \in [N]}{\sum} w_{i} \\
				\text{s.t.} & w_{i}  \geq p_{i}(S_i,X_{N(i)} \quad \text{for all } S_i \in \mathcal{X}_i, \, i \in [N], \\
				& X \in \cX, \, w=(w_1, \ldots, w_N) \in \R^N .
			\end{array}
		\end{equation}
		The proof has two main steps: First, we  show that the optimal value of \eqref{eq:poly-sdg-opt-primal} is equal to zero and second, that it can be reformulated as the SDP in \eqref{primal}. The fact that optimal solutions correspond to Nash equilibria then follows immediately by~\eqref{exploitability}.
		
		We claim that \eqref{eq:poly-sdg-opt-primal} can be expressed as
		the semidefinite program \eqref{primal}.
		Using~\eqref{utilitywithphi} we can rewrite \eqref{eq:poly-sdg-opt-primal}
		in terms of the linear maps $\Phi_i$ \eqref{phimap} as
		\begin{equation}\label{formulation}
			\arraycolsep=1.4pt\def\arraystretch{1.8}
			\begin{array}{lll}
				\underset{w,X}{\min}         & \underset{i \in [N]}{\sum} w_{i} \\
				\text{s.t.} & w_{i}  \geq 
				\langle S_i, \Phi_{i}(X_{N(i)}) \rangle, 
				& \; \forall S_i \in \mathcal{X}_i, \, i \in [N], \\
				& X \in \cX, \, w=(w_1, \ldots, w_N) \in \R^N.
			\end{array}
		\end{equation}
		Clearly, for each player $i\in [N]$ we have that 
		$ w_{i}  \geq 
		\langle S_i, \Phi_{i}(X_{N(i)}) \rangle$ for all $S_i \in \mathcal{X}_i$ if and only if
                     \[
                        w_i\ge \underset{S_i}{\max}\Big\{    \langle S_i, \Phi_{i}(X_{N(i)}) \rangle: \tr(S_i) = 1, \ S_i \succeq 0\Big\}.
                     \]
		Moreover, using \eqref{lambdamax}, for  each of the inner optimization problems we see
		\begin{equation}\label{primallmax}
			\max_{S_i}\Big\{  \langle S_i, \Phi_{i}(X_{N(i)}) \rangle \,:\, \tr(S_i) = 1, S_i \succeq 0 \Big\}=\lambda_{\max}(\Phi_{i}(X_{N(i)})).
		\end{equation}
		Finally,  using  \eqref{primallmax} and noting that
		$$\lambda_{\max}(\Phi_{i}(X_{N(i)}))\le w_i \iff \Phi_{i}(X_{N(i)}) \preceq  w_i\id_{\cA_i},$$ 
		the optimization problem  \eqref{formulation} gives the 
		semidefinite program~\eqref{primal}.
		
		Next we show that  the value of the  primal \eqref{primal} is lower bounded by zero. Indeed, every feasible
		solution $(X_i,w_i)$ of the primal satisfies
		\[
		\sum_{i=1}^N w_i \ \ge \
		\sum_{i=1}^N p_i(X_i, X_{N(i)}) \ = \ 0,
		\]
		where the last equality follows from
		the zero-sum property. Moreover, the primal is strictly feasible, since by considering   large enough $w_i$, we can make the semidefinite inequality become strict. Consequently, by strong duality for linear conic programs, e.g., see \cite[Theorem 3.4.1]{sdpbook}, 
		it follows that we have strong duality and the value of the dual is attained. 
		
		To derive the dual of \eqref{primal}, we consider the Lagrangian function, 
		\begin{align*}
			\mathcal{L}(w,X,\Lambda_i,\Lambda_i',\lambda_i) = &
			\sum_{i \in [N]} w_{i}
			+\sum_{i \in [N]} \langle  \Lambda_i,  \Phi_i(X_{N(i)})-w_iI_i\rangle 
			-\sum_{i \in [N]}\langle \Lambda_i', X_i\rangle \\
			& +\sum_{i \in [N]}\lambda_i(1-\langle I_i,X_i\rangle),
		\end{align*}
		where $I_i$ denotes the identity matrix of order $i$.
		Using the definitions of $\Phi_i$ (cf. \eqref{phimap}),  $\Psi_i$ (cf. \eqref{psimap}) and basic properties of the trace and inner product,
		the Lagrangian can be arranged as
		\begin{align*}
			\mathcal{L}(w,X,\Lambda_i,\Lambda_i',\lambda_i)
			& = \sum_i w_i(1-\tr(\Lambda_i))
			+\sum_i\sum_{j\in N(i)}\langle  \Lambda_i, \Phi_{ij}(X_j) \rangle \\ 
			& -\sum_i\langle \Lambda_i', X_i\rangle 
			 \quad +\sum_i\lambda_i(1-\langle I_i,X_i\rangle). \\
		\end{align*}
		Since 
		$\sum_{j \in N(i)}\langle  \Lambda_i, \Phi_{ij}(X_j) \rangle
		   = \sum_{j\in N(i)}\langle \Phi_{ij}^{\dagger}( \Lambda_i), X_j \rangle
	       = \sum_{j\in N(i)}\langle  X_j, \Psi_{ji}(\Lambda_i) \rangle $ 
		$= \langle  X_i, $ $\Psi_{i}(\Lambda) \rangle$,
		we obtain
		\begin{multline*}
			\mathcal{L}(w,X,\Lambda_i,\Lambda_i',\lambda_i)
			= \sum_i w_i(1-\tr(\Lambda_i))
			+\sum_i \langle  X_i, \Psi_{i}(\Lambda) - \Lambda_i' - \lambda_i I_i \rangle 
			+\sum_i\lambda_i .
		\end{multline*}
		
		Putting everything together, the dual of \eqref{primal} is given by
		\begin{equation}\label{dual}\tag{$D(\Gamma)$}
			\begin{aligned}
				\max_{\lambda_i, \Lambda_i}   & \    \sum_i \lambda_i  \\
				\text{s.t.}  & \ 
				\tr(\Lambda_i)  =  1, \\
				&    \ \Psi_{i}(\Lambda)  \succeq   \lambda_i \mathbb{I}_i, \\
				&  \ \Lambda_i  \succeq  0.
			\end{aligned}
		\end{equation}
		The program \eqref{dual}  is strictly feasible, because for given $\Lambda_i$, choosing
		the numbers $\lambda_j$ negative and with sufficiently
		large absolute values gives feasible solutions.

		We next show that \eqref{dual} has a nonpositive objective
		value. Indeed, let $\lambda_i, \Lambda_i$ be a dual feasible solution. By $\Psi_{i}(\Lambda) \succeq  \lambda_i \mathbb{I}_i$  and \eqref{lambdamax} we have that 
		\begin{equation}\label{check}
			\lambda_i\le \underset{S_i\in \cX_i}{\min}   \langle \Psi_{i}(\Lambda_{N(i)}) , S_i\rangle .
		\end{equation} 
		This implies
		$$
		\begin{aligned}
			\sum_i\lambda_i& \le \sum_i\underset{S_i\in \cX_i}{\min}   \langle \Psi_{i}(\Lambda_{N(i)}) , S_i\rangle  \\
			& =\min_{(S_1,\ldots,S_N)\in \cX}\langle (\Psi_1(\Lambda), \ldots, \Psi_N(\Lambda)), (S_1,\ldots, S_N)\rangle \\
			&=\min_{(S_1,\ldots,S_N)\in \cX} \langle \Psi(\Lambda), (S_1,\ldots, S_N)\rangle \\
			& \le  \langle \Psi(\Lambda), \Lambda \rangle =0 ,
		\end{aligned},$$
		where the first inequality follows by summing \eqref{check}, the second equality since the optimization is separable in the $S_i$'s, the third equality from the definition of the $\Psi$ map  in \eqref{psimap}, the fourth inequality as  $\Lambda_i\in \cX_i$, and the final equality from  \eqref{sumzero}.

		Putting everything together, \eqref{primal} and \eqref{dual} are a pair of primal-dual SDPs, where strict duality holds, both are attained, and the (common) value is equal to~0. 
	\end{proof}

\section{Recognizing zero-sum semidefinite network games\label{se:recognize}}    

We address the question of recognizing whether a semidefinite network game is zero-sum. 
It is not immediately clear how to do so, as the zero-sum condition $\sum_{j=1}^{N} p_{j}(X) = 0$ needs to hold for all (infinitely many) strategy
profiles $X\in \cX$. In the special case where each edge game is a matrix game,
	the situation can be reduced to the finite set of pure strategies. This was studied in 
	\cite{ccdp-polymatrix-16} where it was shown that recognizing the network version of matrix games can be
	achieved by solving a finite number of linear programs and that the number of linear programs
	is polynomial in the number of players and strategies. Hence, recognizing these games can be done in polynomial time.
	
	In the general case of semidefinite network games, we show 
	we can recognize the zero-sum property by deciding whether a certain finite set of semidefinite programs all have optimal value zero. 
	
	For every player $i$, a strategy $X_i \in \mathcal{X}_i$ and $X_{-i} \in \mathcal{X}_{-i}$, 
	let 
	$$W(X_i,X_{-i})= \sum_{j \in [N]} p_j(X_i,X_{-i}).$$
	Note that a game is constant-sum if and only if
	$$W(X_i,X_{-i})=W(Y_i, X_{-i}) \text{ for all } X_i,Y_i\in \cX_i.$$
    That is because a game is constant-sum if and only if the sum over all payoffs remains constant whenever any player switches to any other strategy.
 
	\begin{theorem}Consider a semidefinite network game $\Gamma$ with the payoff operators $\{\Phi_{ij}\}_{i,j\in [N]}$. 
		The  game $\Gamma$ is constant-sum  if and only if for all $i \in [N]$, the semidefinite program
		\begin{equation}
			\label{eq:sdp-recognizing1}
			\begin{array}{rrcll}
				& \underset{X_i,Y_i,  w_\ell}{\min} \ 
				\underset{\ell \in N(i)}{\sum} w_\ell  \\
				& (\Phi^\dagger_{i\ell}+\Phi_{\ell i})(X_i-Y_i) &  \preceq &  w_\ell \id_l, & (\ell \in N(i)), \\ [1ex]
				& w_\ell & \in & \R, & (\ell \in N(i)), \\ [1ex]
				& X_i, Y_i & \in & \mathcal{X}_i & 
			\end{array}
		\end{equation}
		attains its minimum at zero. 
	\end{theorem}
	
	If the game is constant-sum, then we can check whether the game is zero-sum by evaluating
	the sum of the payoffs at an arbitrarily chosen strategy profile of the players.
	
	\begin{proof}
		First we claim that the
		the game $\Gamma$ is constant-sum  if and only if for all $i \in [N]$
		and all
		$X_i, Y_i \in \mathcal{X}_i$, the optimization problem 
		\begin{equation}
			\label{eq:recognizing1}
			\max \Big\{ W(X_i, X_{-i}) - W(Y_i, X_{-i}): X_{-i} \in \mathcal{X}_{-i} \Big\}
		\end{equation}
		attains its maximum at zero. 
		
		Assume that the game $\Gamma$ is constant-sum. Then, by the preceding discussion we have that $ W(X_i,X_{-i})=W(Y_i, X_{-i})$ for all $ X_i,Y_i\in \cX_i $. This means that every feasible solution has value 0, so~\eqref{eq:recognizing1} has optimal value zero. 
		
		Conversely, if the optimal value of~\eqref{eq:recognizing1} is zero,
		then $W(X_i,X_{-i}) = W(Y_i,$ $X_{-i})$ for all 
		$i \in V$ and $X_i, Y_i \in \mathcal{X}_{_i}$,
		$X_{-i} \in \mathcal{X}_i$. Then $\Gamma$ is 
		a constant-sum game.   
		
		The characterization~\eqref{eq:recognizing1} involves infinitely
		many SDPs, because of the quantification over 
		$X_i, Y_i$ in the infinite set $\mathcal{X}_i$. 
		Using duality theory,
		we show that for each player $i \in [N]$,
		this characterization can be formulated
		as the SDP~\eqref{eq:sdp-recognizing1}.
		For this, note that the payoff at any edge game $(k,\ell)$ where $k,\ell\ne i$ do not appear in the objective function, as all these terms cancel out. Expanding the objective function, most terms cancel and we can
        rewrite it as
        \begin{eqnarray*}
            & & W(X_i,X_{-i})- W(Y_i,X_{-i}) \\
            & = &
            \sum_{\ell \in N(i)} \Big( p_{i,\ell}(X_i,X_{\ell}) +
            p_{\ell,i}(X_\ell,X_i) \Big)
            - \sum_{\ell \in N(i)} \Big( p_{i,\ell}(Y_i,X_{\ell}) +
              p_{\ell,i}(X_\ell,Y_i) \Big) \\
            & = & \sum_{\ell \in N(i)}
            \langle X_\ell, (\Phi^\dagger_{i\ell} + \Phi_{\ell i})(X_i) \rangle
            - \sum_{\ell \in N(i)}\langle X_\ell , (\Phi^\dagger_{i\ell}+\Phi_{\ell i}) Y_i \rangle \\
            & = & \sum_{\ell \in N(i)}\langle X_\ell , (\Phi^\dagger_{i\ell}+\Phi_{\ell i})(X_i-Y_i)\rangle.
        \end{eqnarray*} 
		So \eqref{eq:recognizing1} can be written as
		\begin{eqnarray*}
			&  & \max_{X_\ell \in \mathcal{X}_\ell} \sum_{\ell \in N(i)} \langle X_\ell , (\Phi^\dagger_{i\ell}+\Phi_{\ell i})(X_i-Y_i)\rangle 
			=  \sum_{\ell \in N(i)} \lambda_{\max} (\Phi^\dagger_{i\ell}+\Phi_{\ell i})(X_i-Y_i) \\  
			& = & \sum_{\ell \in N(i)} \min_{w_l \in \R} \big \{ w_l : \, (\Phi^\dagger_{i\ell}+\Phi_{\ell i})(X_i-Y_i) \preceq  w_l\id_l \big \} .
		\end{eqnarray*}
		For a fixed player $i$, this is equivalent to the 
		SDP~\eqref{eq:sdp-recognizing1}.
	\end{proof}
	
	\begin{remark}
		While the optimization problems \eqref{eq:recognizing1} 
		are usually considered as tract\-able, let us point out that these problems are related to the decision problem of deciding whether an SDP has
		a feasible solution (SDFP, semidefinite program feasibility problem). 
		The complexity of SDFP in the Turing machine model is not known. 
		Either 
		$\mathrm{SDFP} \in \mathrm{NP} \,\cap\, \mathrm{co{-}NP}$ or
		$\mathrm{SDFP} \not\in \mathrm{NP} \,\cup\, \mathrm{co{-}NP}$.
		One obstacle to efficiently solving SDFP is that, 
		by an example of Khachiyan described in Ramana's 
		work \cite{ramana-1997}, exponential size 
		optimal solutions of semidefinite programs can arise.
		We note that exponential size optimal 
		solutions in an SDP can also arise if the 
		optimal value is known to be zero.
		Namely, if $\mathrm{SDP}_n$ is a family of semidefinite programs with parameter $n$
		that has exponential size optimal solutions in $n$, then one can construct a 
		family of semidefinite programs with parameter $n$, that has optimal value 0 and exponential size solutions. To this end, just duplicate $\mathrm{SDP}_n$ using a new set of
		variables, called $\mathrm{SDP}'_n$, and consider the SDP whose objective function is
		the difference of the objective function of $\mathrm{SDP}_n$ and 
		$\mathrm{SDP}'_n$ and whose feasible region is the Cartesian product of
		the feasible regions of  $\mathrm{SDP}_n$ and $\mathrm{SDP}'_n$. 
	\end{remark}

\begin{remark}
As mentioned in the Introduction, every
polymatrix game can be transformed into a payoff-equivalent
polymatrix game with constant-sum edges
(see \cite[Sections 4 and 5]{ccdp-polymatrix-16}). The construction
carries over to semidefinite network games.
Hence, every zero-sum semidefinite network game $\Gamma$
can be transformed into a payoff-equivalent
semidefinite network game in which
all edges are constant-sum semidefinite games. By
further adding constant tensors to the payoff tensors
of the edges, $\Gamma$ can be transformed into a semidefinite
network game $\Gamma'$ in which all edges are zero-sum
semidefinite games and such that $\Gamma$ and $\Gamma'$ have
the same Nash equilibria.
\end{remark}

	\section{Nash equilibria via  semidefinite linear complementarity problems and the connection to quantum games}
	\label{sec:sdp-lcp}

	It is well-established that Nash equilibria of bimatrix and network games can be equivalently formulated as solutions to linear complementarity problems over the nonnegative orthant, e.g., see \cite{vonstengel} and  \cite{howson}. In this section, we extend this result and demonstrate that Nash equilibria of semidefinite 
           network games can also be characterized as solutions to semidefinite linear complementarity problems (semidefinite LCP).
	
	A semidefinite LCP instance is defined in terms of a linear map   $L:~\her(\cA) $ $ \to \her(\cA)$ and a matrix $Q\in \her(\cA)$. The objective is to find a Hermitian matrix $X$ that satisfies the following three conditions:
	\begin{equation}\label{complementarity}
		X\succeq 0, \quad  L(X)+Q\succeq 0,  \quad  \langle X,L(X)+Q\rangle=0.
	\end{equation}
	Algorithms for solving semidefinite LCPs typically  exploit the specific  properties of the linear operator $L$ and the matrix $Q$,
	see, e.g., \cite{kojima,gowda} and the references therein. 
	
	\begin{theorem}
		The set of Nash equilibria of a semidefinite network game are the solutions to a semidefinite linear complementarity problem.
	\end{theorem}
	
	\begin{proof}Consider a strategy profile  $(X_1,\ldots, X_n)$ that is  a Nash equilibrium of a semidefinite network game. By definition of a Nash equilibrium, for  any player $i\in [N]$ we have that $X_i$ is a best response to $X_{-i}$, i.e.,
		$$X_i \in \underset{_{Y_i\in \cX_i}}{\rm argmax} \langle Y_i, \Phi_i(X)\rangle.$$
		Considering the optimization problem
		$$\max\Big\{  \langle Y_i, \Phi_i(X)\rangle: Y_i\in \cX_i\Big\},$$
		its dual is given by
		$$\min_{\lambda_i, Z_i} \Big\{\lambda_i: \lambda_i \id_{\cA_i}-\Phi_i(X) =Z_i\succeq 0\Big\}.$$
		Consequently, by strong duality for SDPs we have that  $(X_1,\ldots, X_n)$  is  a Nash equilibrium iff there exist $\lambda_1,\ldots, \lambda_N$ such that for all $i\in [N]$ we have 
		\begin{equation}\label{KKT}
			\begin{aligned}
				X_i & \succeq 0,\\
				\lambda_i\id_{\cA_i}-\Phi_i(X) & \succeq 0,\\
				\tr(X_i) & =1,\\
				\langle X_i, \lambda_i \id_{\cA_i}-\Phi_i(X)\rangle & =0.\\
			\end{aligned} 
		\end{equation}
		Lemma \ref{nonnegativity} establishes that we can assume each payoff matrix to be PSD. Therefore, the map $\Phi_i$ becomes a completely positive map, implying that $\Phi_i(X)$ is also a PSD matrix.  Thus, the generalized inequality $ \lambda_i\id_{\cA_i}-\Phi_i(X)\succeq 0 $ implies that $\lambda_i\ge 0$. 
		Now consider the semidefinite LCP 
		\begin{equation}\label{LCP}
			\begin{aligned}
				X_i & \succeq 0, \ i\in [N], \\
				\id_{\cA_i}-\Phi_i(X) & \succeq 0, \ i\in [N], \\
				\langle X_i, \id_{\cA_i}-\Phi_i(X)\rangle & =0, \ i\in [N]
			\end{aligned} 
		\end{equation}
		and note  that if $X_i,\lambda_i$ are feasible for $\eqref{KKT}$ then $\tfrac{1}{\lambda_i}X_i$ is feasible for \eqref{LCP}, and conversely, if $X$ is feasible for \eqref{LCP} then $\tfrac{X_i}{\tr(X_i)}, \lambda_i=\tfrac{1}{\tr(X_i)}$ is feasible for \eqref{KKT}. 
		Finally, to write \eqref{KKT} in the standard form \eqref{complementarity} we take 
		\begin{equation}
			X\in \pos(\oplus_i \cA_i), \quad 
			L(X)=-\oplus_i\Phi_i(X),\quad Q=\oplus_i\id_{\cA_i}.
		\end{equation} 
    This concludes the proof.
	\end{proof} 

\medskip

\noindent
{\bf Relationship to quantum games\label{se:quantum-games}.}
	We briefly put the connection of semidefinite network games to quantum games into perspective.
	Quantum game theory analyzes the  interactions of  quan\-tum-enabled agents, which  can process and exchange information  in accordance with  the laws of quantum mechanics.
	The concept of a Nash equilibrium from classical games is replaced by the notion of a  Quantum Nash Equilibrium (QNE) \cite{shengyu}.
	
In the zero-sum setting, QNEs can be efficiently computed using semidefinite programming. Specifically, \cite{watrous1} establishes this result in the general framework of quantum refereed games, where quantum-enabled players interact over multiple rounds with a referee, who  ultimately determines their payoffs based on a quantum measurement. 
\cite{jain-watrous-2009} derived an efficient  parallel algorithm based on the Matrix Multiplicative weights method to find approximate equilibrium points of non-interactive zero-sum quantum games.  
On the other hand, the problem of finding QNEs is  hard for a broad
class of quantum games, see \cite{watrous2}. 

Semidefinite games are special cases of quantum refereed games, a powerful model for studying  multi-round interactions of quantum-enabled agents \cite{watrous1}. The formalism in \cite{watrous1} describes a game played between two players, Alice and Bob, which is arbitrated by a referee. The referee’s output, after interacting with Alice and Bob for a fixed number of rounds, determines their payoffs. Quantum refereed games allow for multiple rounds of interaction, where players exchange quantum registers with the referee while potentially maintaining individual memory spaces. In \cite{watrous1}, the authors demonstrated how strategies in these types of games can be represented as feasible solutions of appropriate semidefinite programs, ensuring that in the zero-sum case a min-max value and a Nash equilibrium always exist.

The simplest possible instance of the refereed quantum games framework, which we refer to as a non-interactive quantum game, is equivalent to the framework of semidefinite games, as we now explain. In this simplest setup, the two players, Alice and Bob, each has access to a quantum system. Each player holds a quantum register of a predetermined size, which is mathematically isomorphic to a complex Euclidean space,~$\mathbb{C}^k$, for some integer $k \geq 1$. The players prepare a quantum state in their respective registers: Alice prepares a state represented by a density operator $\rho$ and Bob prepares a state represented by $\sigma$. The two registers are then sent to the referee and since the players are uncorrelated, the joint system is in the tensor product state $\rho \otimes \sigma$ (also known as an unentangled state). Consequently, the referee performs a joint measurement on the combined system. According to the postulates of quantum mechanics, this measurement is described by an observable \( R \), which is a Hermitian matrix with spectral decomposition \( R = \sum_i \lambda_i P_i \), where \( P_i \) are orthogonal projectors onto the corresponding eigenspaces. Upon measuring the state \( \rho \otimes \sigma \), the probability of outcome \( \lambda_i \), which we interpret as the players’ payoff, is given by \( \tr( \rho \otimes \sigma P_i) \). As a result, the expected payoff in this game is \( \tr(\rho \otimes \sigma R) =\langle R, \rho\otimes \sigma\rangle.\)

\section{Conclusion}

We have considered semidefinite network games,
where players
reside at graph nodes and their results hinge on the actions of
neighboring players. The player's strategies entail positive semidefinite matrices
and this makes them suitable for modeling quantum games on networks. 
When the games are zero-sum, we compute the corresponding  Nash
equilibria through semidefinite programs. Identifying a
semidefinite network game equates to ascertaining that an semidefinite program 
has value zero. In cases beyond zero-sum scenarios, Nash equilibria correspond 
to solutions of a semidefinite linear complementarity problem.

\subsection*{Acknowledgements}
The authors would like to thank the anonymous reviewers for the helpful and constructive comments. 
They would also like to thank Sylvain Sorin for his comments. 
CI, TT and ET are partially supported by the joint PROCOPE project ``Quantum games
and polynomial optimization'' of the MEAE/MESR and the DAAD (57753345).
 TT is partially supported through the DFG Priority Program "Combinatorial Synergies" (grant no.\ 539847176).
ET is partially supported the PGMO grant SOAP (Sparsity in Optimization via Algebra and Polynomials),
ANR JCJC PeACE (ANR-25-CE48-3760), 
and ANR PRC ZADyG (ANR-25-CE48-7058). 
AV   is supported by the MOE Tier 2 Grant (MOE-T2EP20223-0018), NRF  Singapore under its QEP2.0 programme (NRF2021-QEP2-02-P05), the CQT++
Core Research Funding Grant (SUTD) (RS-NRCQT-00002) and partially by Project MIS 5154714 of the National Recovery and
Resilience Plan, Greece 2.0, funded by the European Union under the NextGenerationEU 
Program.
	
\bibliographystyle{abbrv}
\bibliography{games}
	
\end{document}